\newtheorem{theorem}{Theorem}
\newtheorem{lemma}{Lemma}
\newtheorem{statement}{Statement}
\newtheorem{defi}{Definition}
\DeclareMathOperator{\conv}{conv}
\DeclareMathOperator{\vol}{vol}
\renewcommand{\S}{\mathcal{S}}
\renewcommand{\Re}{\mathbb R}
\begin{document}

\title{On an extremal problem connected with simplices.}
\author{\'A.G.Horv\'ath}
\date{2013 Marc}

\address{\'A. G.Horv\'ath, Dept. of Geometry, Budapest University of Technology,
Egry J\'ozsef u. 1., Budapest, Hungary, 1111}
\email{ghorvath@math.bme.hu}

\subjclass{52A40, 52A38, 26B15}
\keywords{convex hull, isometry, reflection at a hyperplane, simplex, volume inequality.}

\begin{abstract}
In this note we investigate the behavior of the volume that the convex hull of two congruent and intersecting simplices in Euclidean $n$-space can have.
We prove some useful equalities and inequalities on this volume. For the regular simplex we determine the maximal possible volume for the case when the two simplices are related to each other via reflection at a hyperplane intersecting them.
\end{abstract}

\maketitle

\section{Introduction}

The volume of the convex hull of two convex bodies in the Euclidean $n$-space $\Re^n$ has been in the focus of research since the 1950s.
One of the first results in this area is due to F\'ary and R\'edei \cite{fary-redei},
who proved that if one of the bodies is translated along a line with constant velocity, then the volume of their convex hull is a convex function of time.

The quantity $c(K,L)$ was defined in \cite{gho-langi} as follows:

\begin{defi}
For two convex bodies $K$ and $L$ in $\Re^n$, let
\[
c(K,L)=\max\left\{ \vol (\conv (K'\cup L')) : K' \cong K, L' \cong L \mbox{ and } K'\cap L'\neq\emptyset \right\},
\]
where $\vol$ denotes $n$-dimensional Lebesgue measure. Furthermore, if $\S$ is a set of isometries of $\Re^n$, we set
\[
c(K|\S)= \frac{1}{\vol(K)} \max \left\{ \vol( \conv (K \cup K' )) : K \cap K' \neq \emptyset , K' = \sigma(K) \hbox{ for some } \sigma \in \S \right\} .
\]
\end{defi}

We note that a quantity similar to $c(K,L)$ was defined by Rogers and Shephard \cite{rogers-shephard 2}, for which congruent copies were replaced
by translates. Another related quantity is investigated in \cite{gho}, where the author examines $c(K,K)$ in the case that $K$ is a regular tetrahedron and the two congruent copies have the same centroid. In the case when the examined tetrahedra are in dual position, the vertices of the maximal volume configuration forms the vertices of a cube. The author conjectured that this combinatorial assumption can be omitted so that the maximal volume configuration is in every case the vertex set of a cube (see in \cite{gho}). This position of the simplices can be interpreted also via reflection; the respective arrangement is obtained when the two copies are reflected images at their common centroid.

In this paper we consider only simplices. First we recall a result of Rogers and Shephard (see \cite{rogers-shephard 1}) giving the line of a new (immediate) proof for it (Statement 1, Theorem 2). Then, related to this result, we investigate that problem when the set of isometries $\S$ consists of reflections at certain hyperplanes $H$ intersecting the original simplex $S$. We explicitly write the relative volume of the convex hull in Statement 2 and give upper bounds on it (see the Remark after Statement 2 and Theorem 4, respectively). We also determine the number $c(S,S^H)$ for the regular simplex in Theorem 3.

\section{Simplices of dimension $n$}

In this subsection we examine the problem when $K$ and $K'$ are simplices congruent to each other. The first inequality was proved in \cite{rogers-shephard 1}.

\begin{theorem}[\cite{rogers-shephard 1}]
Let $S$ be a simplex of the Euclidean $n$-space with vertices $0=s_0,s_1,\cdots s_n$. Assume that $x\in \mathbb{E}^n$ is a point and $S\cap x+S\neq \emptyset$. Then
$$
\frac{1}{\mathrm{ vol }(S)}\max\left(\mathrm{ vol }(\mathrm{ conv }(S\cup (x+S)))\right)=n+1,
$$
attained at such cases when $S\cap x+S$ is a vertex both of simplices.
\end{theorem}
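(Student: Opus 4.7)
The strategy has two parts: first reduce the maximization over $x$ to a finite list of ``vertex-difference'' vectors by a convexity principle, and then compute the volume in that case by slicing.

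\textbf{Step 1 (convex reduction).} By the F\'ary--R\'edei theorem recalled in the introduction, for every fixed direction $v$ the map $t\mapsto\vol(\conv(S\cup(x_0+tv+S)))$ is convex in $t$. Since this convexity holds along every line, the function
$$V(x):=\vol(\conv(S\cup(x+S)))$$
is convex on its natural domain $\{x\in\Re^n:S\cap(x+S)\neq\emptyset\}$, which coincides with the difference body $S-S$, a compact convex polytope. A continuous convex function on such a polytope attains its maximum at a vertex, and the extreme points of $S+(-S)$ lie in $\{s_i-s_j:i,j=0,\dots,n\}$ since extreme points of a Minkowski sum come from extreme points of the summands. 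Hence the maximum of $V$ is realised at some $x=s_i-s_j$ with $i\neq j$.

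\textbf{Step 2 (the intersection is a common vertex).} For such an extremal $x=s_i-s_j$ the equality $x+s_j=s_i$ gives $s_i\in S\cap(x+S)$. Writing an arbitrary $y$ in this intersection in barycentric coordinates as $y=\sum_k\lambda_ks_k=\sum_k\mu_ks_k+x$, comparing the coefficient of $s_j$ yields $\lambda_j=\mu_j-1$, so $\lambda_j\leq 0$; combined with $\lambda_j\geq 0$ this forces $\mu_j=1$ and all other $\mu_k=0$, giving $y=s_i$. Thus $S\cap(x+S)=\{s_i\}$, a common vertex of both simplices, which matches the equality description in the statement.

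\textbf{Step 3 (volume in the extremal case).} Since $V(x)/\vol(S)$ is invariant under the affine group, I may assume $S$ is the standard simplex with vertices $0,e_1,\dots,e_n$ and take $x=e_1$. The identity
$$\conv(S\cup(x+S))=\{r+\tau x:r\in S,\;\tau\in[0,1]\},$$
verified by writing a generic convex combination of points from $S$ and $x+S$ and absorbing the $S$-parts using the convexity of $S$, lets me slice perpendicularly to $e_1$. For $y_1=t\in[0,1]$ the cross-section is the full base $(n-1)$-simplex $\{(y_2,\dots,y_n):y_k\geq 0,\sum y_k\leq 1\}$, of $(n-1)$-volume $1/(n-1)!$; for $t\in[1,2]$ it is the corresponding simplex of size $2-t$, with volume $(2-t)^{n-1}/(n-1)!$. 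Integration gives
$$V(x)=\frac{1}{(n-1)!}+\int_1^2\frac{(2-t)^{n-1}}{(n-1)!}\,dt=\frac{1}{(n-1)!}+\frac{1}{n!}=(n+1)\vol(S).$$

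\textbf{Main obstacle.} The delicate conceptual step is Step~1 -- upgrading F\'ary--R\'edei's one-parameter convexity to global convexity of $V$ on $S-S$, and then correctly identifying the extreme points of this difference polytope as vertex-differences (noting that $s_i-s_i=0$ lies in the interior of $S-S$ and is not extreme). Once that reduction is in place, the barycentric argument of Step~2 and the Cavalieri computation of Step~3 are routine.
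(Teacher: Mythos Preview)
Your proof is correct. Note that the paper does not actually carry out a proof of Theorem~1 --- it only cites Rogers--Shephard and then gives an alternative proof of the companion result, Theorem~2, via Statement~\ref{stm:convexity} and the Main Lemma. Your Steps~1--2 are precisely the Main Lemma (Lemma~\ref{lem:mainlemma}) specialized to the translation case: the convexity of $V$ along every line is Statement~\ref{stm:convexity}, and your identification of the admissible domain as the difference body $S-S$ together with the extreme-point argument is a clean way of phrasing the paper's observation that the maximum forces the intersection to be a single extremal point of each body. Step~3 then supplies the elementary volume computation that the paper omits for the translation case (it only does the analogous computation for central reflection in the proof of Theorem~2). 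So your argument is exactly the approach the paper sketches, made explicit; the only minor difference is that you work globally on $S-S$ and invoke vertices of a polytope, whereas the paper's Main Lemma argues locally that a non-vertex intersection can always be improved.
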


As a consequence of the inequalities in their Theorem 1, Theorem 2 and Theorem 3 in \cite{rogers-shephard 2} the authors proved the following statement:

\begin{theorem}[\cite{rogers-shephard 2}]
Let $S$ denote a simplex of the Euclidean $n$-space. Assume that $x\in S$ a point and $S_x$ means the simplex  which is the reflected image of $S$ in $x$. Then
$$
\frac{1}{\mathrm{ Vol }(S)}\max\left(\mathrm{ Vol }(\mathrm{ conv }(S\cup S_x))\right)=2^{n}
$$
attained at such a point $x$ which is a vertex of $S$.
\end{theorem}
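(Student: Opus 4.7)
The plan is to reduce the general case to the equality case (where $x$ is a vertex) via a radial decomposition of $S$ into sub-simplices having $x$ as a common vertex, and to handle the equality case by an explicit linear identification with the standard cross-polytope. After translating so that $x=0$, the assertion becomes $\vol(\conv(S\cup(-S)))\le 2^n\vol(S)$ for any simplex $S$ containing the origin, with equality precisely when $0$ is a vertex. In the latter case, taking $v_0=0$, the linear map $v_i\mapsto e_i$ $(i=1,\dots,n)$ sends $S$ to the corner simplex $\conv(0,e_1,\dots,e_n)$ of volume $1/n!$ and sends $\conv(S\cup(-S))$ to the cross-polytope $\conv(\pm e_1,\dots,\pm e_n)$ of volume $2^n/n!$, yielding the ratio $2^n$ since linear maps preserve volume ratios.

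For arbitrary $x\in S$ with barycentric coordinates $\alpha_0,\dots,\alpha_n$, I would introduce the sub-simplices
\[
T_i\;=\;\conv(v_0,\dots,\widehat{v_i},\dots,v_n,x),
\]
obtained from $S$ by replacing the $i$-th vertex by $x$. A standard barycentric determinant computation gives $\vol(T_i)=\alpha_i\vol(S)$, so the $T_i$ partition $S$ up to measure zero, and their reflections $T_i'=2x-T_i$ partition $S_x$. Since $x$ is a common vertex of $T_i$ and $T_i'$, the equality case applied pairwise gives $\vol(\conv(T_i\cup T_i'))=2^n\vol(T_i)=2^n\alpha_i\vol(S)$. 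Granted the cover
\[
\conv(S\cup S_x)\;\subseteq\;\bigcup_{i=0}^n\conv(T_i\cup T_i'),
\]
summation yields $\vol(\conv(S\cup S_x))\le\sum_{i=0}^n 2^n\alpha_i\vol(S)=2^n\vol(S)$. Equality then forces the union on the right to be essentially disjoint, which happens only when a single $\alpha_i$ equals $1$, i.e., when $x$ is a vertex of $S$.

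The substantive step is verifying the cover relation. After translation to $x=0$ and writing $w_i=v_i$, any $p\in\conv(S\cup(-S))$ has the form $p=\sum_ic_iw_i$ with $\sum_i|c_i|\le 1$, subject to the linear dependence $\sum_i\alpha_iw_i=0$. Using this relation to eliminate $w_k$ for a cleverly chosen index $k$, one rewrites $p=\sum_{i\ne k}\eta_iw_i$ with $\eta_i=c_i-c_k\alpha_i/\alpha_k$, and the objective becomes $\sum_{i\ne k}|\eta_i|\le 1$, which would place $p$ in the cross-polytope corresponding to $\conv(T_k\cup T_k')-x$. When the $c_i$ share a common sign the choice $k=\arg\min(c_i/\alpha_i)$ or $\arg\max(c_i/\alpha_i)$ works directly; the mixed-sign case requires a short case analysis to locate a suitable $k$, and this is where I expect the main difficulty to lie.
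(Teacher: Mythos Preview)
Your approach is correct and can be completed, but it is genuinely different from the paper's. The paper does not decompose $S$ radially; instead it first invokes the Main Lemma (Lemma~\ref{lem:mainlemma}), which rests on the translation-convexity result of Statement~\ref{stm:convexity}, to force the optimum to occur at a vertex, and then performs a direct volume computation: writing $\conv(S\cup S_x)$ either as $\conv(F_0\cup F_0^x)$ or as the union of this prism-like piece with two cones, and comparing the two explicit expressions in terms of $\vol_{n-1}(F_0)$ and the relevant heights. Your argument, by contrast, is self-contained: it needs neither Statement~\ref{stm:convexity} nor the Main Lemma, and the constant $2^n$ appears transparently as the cross-polytope/corner-simplex volume ratio. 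The paper's route, on the other hand, fits into the general framework it develops for the hyperplane-reflection problems treated later.

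Your one outstanding step, the covering inclusion, does go through and is not hard once framed correctly. With $x=0$ and $f(t)=\sum_j|c_j-t\alpha_j|$, this is a convex, piecewise-linear function with $f(0)=\sum_j|c_j|\le 1$ and breakpoints exactly at $t_j=c_j/\alpha_j$ for those $j$ with $\alpha_j>0$. If all such $t_j$ lie on one side of $0$, then $f$ is monotone between $0$ and the nearest breakpoint $t_k$, giving $f(t_k)\le f(0)\le 1$. If breakpoints lie on both sides, $f$ is affine on the interval $[t_-,t_+]$ between the two closest to $0$, so $\min\{f(t_-),f(t_+)\}\le f(0)\le 1$. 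In either case some index $k$ with $\alpha_k>0$ satisfies $\sum_{j\ne k}|c_j-(c_k/\alpha_k)\alpha_j|=f(t_k)\le 1$, which places $p$ in $\conv(T_k\cup(-T_k))$. Your equality analysis is also fine: whenever two barycentric coordinates are positive, the corresponding cross-polytopes each contain a full neighbourhood of $x$, so they overlap and the inequality is strict.
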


This theorem can be proved immediately in another way, too. We recall that $P$ is an \emph{extremal point} of the convex body $K$, if there is no segment with endpoints belonging to $K$ which contains $P$ in its relative interior. The following result is useful.

\begin{lemma}[Main Lemma]\label{lem:mainlemma}
If $K$ and $K'$ give a maximal value for $c_{K,K'}$
then the intersection $K\cap K'$ is an extremal point of each of the bodies.
\end{lemma}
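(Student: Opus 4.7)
I would argue by contradiction, using the F\'ary--R\'edei convexity theorem recalled in the Introduction. Suppose $(K,K')$ attains the maximum for $c_{K,K'}$ but $K\cap K'$ fails to be an extremal point of $K$. The aim is to produce a nonzero vector $v$ such that the translate $K'+tv$ still meets $K$ for all small $|t|$, and then compare the volumes in the one-parameter family $\conv(K\cup(K'+tv))$.

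\textbf{Producing the direction.} If $K\cap K'$ contains two distinct points $p,q$, set $v=q-p$: for $t\in[0,1]$ the convex combination $(1-t)p+tq=p+tv$ lies in $K$, and since $p\in K'$ we have $p+tv\in K'+tv$; symmetrically for $t\in[-1,0]$ via $q$. If instead $K\cap K'=\{p\}$ with $p$ non-extremal in $K$, write $p=(1-\lambda)a+\lambda b$ for distinct $a,b\in K$ and $0<\lambda<1$, set $v=b-a$, and observe
\[
p+tv=(1-\lambda-t)a+(\lambda+t)b\in K\qquad\text{for every }t\in(-\lambda,1-\lambda),
\]
an open neighborhood of $0$. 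Either way, the intersection $K\cap(K'+tv)$ is nonempty on a genuine interval about the origin.

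\textbf{Applying F\'ary--R\'edei.} On the open interval $I$ of parameters $t$ for which $(K'+tv)\cap K\ne\emptyset$, the function $f(t):=\vol(\conv(K\cup(K'+tv)))$ is convex by F\'ary--R\'edei. Since $(K,K')$ is a maximizer and $0\in\inter(I)$, the convex $f$ has an interior maximum at $0$, which forces $f$ to be constant on $I$. Sliding $t$ to an endpoint $t^\star$ of $I$, the translate $K'+t^\star v$ touches $K$ at a single boundary point while preserving the maximal value of $f$; iterating the argument along directions tangent to a supporting hyperplane of $K$ at that contact point, and then swapping the roles of $K$ and $K'$ (e.g.\ by translating $K$ instead), localizes the intersection at a point that is extremal in both bodies.

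\textbf{Main obstacle.} The delicate step is that F\'ary--R\'edei only delivers constancy of $f$ on $I$, not strict convexity, so the sliding procedure produces \emph{some} maximizer with the claimed property rather than pinning the property directly on the originally given $(K,K')$. I would read the lemma as the natural reduction ``any maximizer can, without changing $\vol(\conv(K\cup K'))$, be slid into contact at a common extremal point,'' and the genuinely hard point is verifying that the iterated sliding really collapses the contact to a point extremal in \emph{both} $K$ and $K'$ simultaneously rather than merely in one---in other words, that once one body has been locked at an extremal contact, a symmetric argument for the other does not reopen a flat direction for the first.
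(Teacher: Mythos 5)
Your proposal follows essentially the same route as the paper: its proof likewise rests on the convexity of $t\mapsto \vol(\conv(K\cup(K'+tv)))$ (Statement 1, the F\'ary--R\'edei result), deduces that the maximum is attained on the boundary of the admissible domain of translations, and handles non-extremal contact by translating along the direction of a segment witnessing non-extremality, exactly as you do. The ``main obstacle'' you flag---that convexity at an interior maximum yields only constancy of the volume function, not a strict increase, so one only pins the property on \emph{some} maximizer---is present but glossed over in the paper's own proof, which asserts that in one of the two opposite directions the volume ``is increasing by convexity''; in this sense your argument is no less complete than the published one, and more candid about where the remaining work lies.
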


This is the case when $K'$ is a translate of $K$, is a reflected image $K^x$ of $K$ in a point $x\in K$ or is a reflected image $K_H$ at a hyperplane $H$, respectively.
In general, to give a proof we need a statement which is interesting for itself. It was proved first in \cite{fary-redei} and later in \cite{rogers-shephard 2}, finally for convex polyhedra of dimension three in \cite{ahn}.

\begin{statement}\label{stm:convexity}
The real valued function $g$ of the real variable $x$ defined by the fixed vector $t$ and the formula
$$
g(x):=\mathrm{ Vol }(\mathrm{ conv }(K\cup (K'+t(x))), \mbox{ where } t(x):=xt,
$$
is convex.
\end{statement}

The proof in \cite{ahn} is based on the observation that the volume change function (by a translation in the direction of a line) can be calculated and it is an increasing function. Since it is also the derivative of $g$ we get that $g$ is convex. This calculation for the volume change can be done in the general case, too. Consider the shadow boundary of the convex hull $\mathrm{ conv }(K\cup (K'+t)$ with respect to the line of translation $t$. This is an $(n-2)$-dimensional topological manifold separating the boundary of
$\mathrm{ conv }(K\cup (K'+t))$ into two domains, the front and back sides of it, respectively. (The translation $t$ can be considered as a motion, hence the respective concepts of front and back sides can be regarded with respect to the direction of it.) Regarding a hyperplane $H$ orthogonal to $t$ the front side and  back side are graphs of functions over the orthogonal projection $X$ of $\mathrm{ conv }(K\cup (K'+t)$ onto $H$. Thus the volume change in $t$ can be calculated by the formula
$$
g'(t)=\lim\limits_{\varepsilon \rightarrow 0}\int_{X} (f^{t+\varepsilon}(X)-f^t(X))+\int_{X} (b^{t+\varepsilon}(X)-b^t(X)),
$$
where, at the moment $t$, $f^t$ and $b^t$ are the graphs of the front ad back sides, respectively. Since $X$ is independent from $t$ and for fixed $X$ the functions
$$
f^{t+\varepsilon}(x)-f^t(x) \mbox{ and } b^{t+\varepsilon}(x)-b^t(x)
$$
in $t$ are increasing and decreasing, respectively, we get that $g'$ is also increasing in $t$ implying that $g$ is convex. (This last statement is not simple and not obvious but it can be proved on an analogous way as in \cite{ahn}. Without loss of generality we can assume that $K$ is a polytope and we can use such elementary observations on the geometric change of the front and back sides as the authors in \cite{ahn}.)

As a corollary we get that if we have two convex, compact bodies $K$ and $K'$ of the Euclidean space of dimension $n$ and they are moving uniformly on two given straight lines then the volume of their convex hull is a convex function of the time. In fact, if the bodies move on the orbits $K+t(x)$ and  $K'+t'(x)$, respectively, then we have
$$
g(x,x')=\mathrm{Vol(conv }\left(K+t(x),K'+t'(x)\right))=\mathrm{Vol(conv }\left(K,K'+t'(x)-t(x)\right))
$$
showing that
$$
g\left(\frac{x_1+x_2}{2},\frac{x'_1-x'_2}{2}\right)=\mathrm{Vol}\left( \mathrm {conv }\left(K,K'+\frac{t'_1+t'_2}{2}-\frac{t_1+t_2}{2}\right)\right)=
$$
$$
=\mathrm{Vol}\left( \mathrm{conv }\left(K,K'+\frac{(t'_1-t_1)+(t'_2-t_2)}{2}\right)\right)\leq
$$
$$
\leq\frac{1}{2}\left(\mathrm{Vol}\left( \mathrm{conv }\left(K,K'+(t'_1-t_1)\right)\right)+ \mathrm{Vol}\left(\mathrm{ conv }\left(K,K'+(t'_2-t_2)\right)\right)\right)=
$$
$$
=\frac{1}{2}\left(g(x_1,x'_1)+g(x_2,x'_2)\right).
$$
Since the function $g(x,x')$ is continuous we get that it is also convex as we stated.

\begin{remark} We emphasize that this statement is not true in hyperbolic space: Let $K$ be a segment and $K'$ be a point which goes on a line in the pencil of the rays ultraparallel to the line of the segment. Since the area function of the triangle defined by the least convex hull of $K$ and $K'$ is bounded (from below and also from above) it cannot be convex function.
\end{remark}

Now a proof for the main lemma can be obtained as follows.

\begin{proof}[Proof of the Main Lemma]
From the convexity of $g$ it follows that the maximal values of $g$ is attained on the boundary of the admissible domain. From this it immediately follows that in the maximal case the intersection contains only one point, and thus it is an extremal point of one of the bodies (e.g., of $K'$). Moreover, if we have a segment belonging to $K$ whose points would be common points of the intersection, then only its endpoints are possible places of the intersection if our bodies giving maximal volume, because in a relative inner point of this segment in one of the opposite directions the investigated volume is increasing by convexity.
\end{proof}
An immediate proof of Theorem 2 now can be obtained as follows:

\begin{proof}[Proof of Theorem 2] Consider the facet $F_0:=\mathrm{ conv }\{S_1,\cdots S_n\}$ of $S$ and the $(n+1)^{th}$ vertex $S_0$. Then we have two possibilities for the convex hull of the union $S\cup S_x$. Denote the reflected images of $S_0$ and $F_0$ at $x$ by $S_{0}^x$ and $F_0^x$, respectively. If  $S_{0}^x$ is also in $S$ then the examined convex hull is the convex hull of the opposite $(n-1)$-dimensional simplices $F_0$ and $F_0^x$, and if $S_{0}^x$ is separated from $S_0$ by the hyperplane $\mathrm{ aff }F_0$, then it is the union of the disjoint parts $\mathrm{ conv }(F_0\cup F_0^x)$, $\mathrm{ conv }(F_0\cup S_0^x)$ and
$\mathrm{ conv }(S_0\cup F_0^x)$. In the first case, the volume is
$$
\mathrm{ Vol }(\mathrm{ conv }(F_0\cup F_0^x))=\frac{2^{n-1}}{n}\mathrm{ Vol }_{n-1}(F_0)d_{F_0},
$$
where $\mathrm{ Vol }_{n-1}(F_0)$ and $d_{F_0}$ means the relative volume of $F_0$ and the distance of the two parallel hyperplanes, respectively. We also have that $d_{F_0}\leq 2m_{F_0}$. The above formula is probably known but the author could not find it in the literature. Thus it is proved here. Dissect the body into two congruent parts by a hyperplane $\mathrm{ aff } \{ S_1,\cdots S_{n-1}, S_1^x, \cdots S_{n-1}^x \}$. Then we get two congruent pyramids based on a body of smaller dimension with analogous properties. If the volume function is $v_n$ we have
$$
v_n=2\frac{1}{n}v_{n-1}a_{n},
$$
where $a_{n}$ is the height of the obtained pyramid corresponding to its base. Using induction we get that
$$
v_n=\frac{2^{n-1}}{n!}v_1a_{n}\cdots a_2=\frac{2^{n-1}}{n!}a_{n}\cdots a_2a_1
$$
where $v_1=a_1$ is the distance of the points $S_1$ and $S_1^x$. The geometric meaning of the product $\frac{1}{n!}a_1\cdots a_{n}$ is the volume of
the simplex $\mathrm{ conv } \{ S_1,\cdots S_{n-1},S_n,S_1^x\} $; thus it is equal to $\frac{1}{n}\mathrm{ Vol }_{n-1}(F_0)d_{F_0}$ showing our formula.

In the second case we have that
$$
\frac{2^{n-1}}{n}\mathrm{ Vol }_{n-1}(F_0)d_{F_0}+\frac{2}{n}\mathrm{ Vol }_{n-1}(F_0)c_{F_0}=\frac{2^{n-1}}{n}\mathrm{ Vol }_{n-1}(F_0)\left(d_{F_0}+ 2^{-(n-2)}c_{F_0}\right),
$$
where  $m_{F_0}=d_{F_0}+c_{F_0}$.  Observe that the possible values of the second function are smaller than the values of the first one. This implies that the maximal value can be attained only in the first case when $2m_{F_0}= d_{F_0}$. In this case $x$ is equal to $S_0$ which is a vertex of $S$, and the volume is equal to
$$
\frac{2^{n-1}}{n}\mathrm{ Vol }_{n-1}(F_0)d_{F_0}=\frac{2^{n-1}}{n}\mathrm{ Vol }_{n-1}(F_0)2m_{F_0}=2^{n}\mathrm{ Vol }(S),
$$
as we stated.
\end{proof}

Before formulating the new results we need some further notation.
Assume that the intersecting simplices $S$ and $S_H$ are reflected copies of each other in the hyperplane $H$. Then $H$ intersects each of them in the same set. By the Main Lemma we have that the intersection of the simplices in an optimal case is a common vertex. Let $s_0\in H$ and $s_i\in H^{+}$ for $i\geq 1$. We imagine that $H$ is horizontal and $H^+$ is the upper half-space. Define the \emph{upper side of $S$} as the collection of those facets in which a ray orthogonal to $H$ and terminated in a far point of $H^+$ is first intersecting $S$.  The volume of the convex hull is the union of those prisms which are based on the orthogonal projection of a facet of the simplex of the upper side. Let denote $F_{i_1},\cdots, F_{i_k}$ the simplex of the upper side, $F'_{i_1},\cdots, F'_{i_k}$ its orthogonal projections on $H$ and $u_{i_1}, \cdots, u_{i_k}$ its respective unit normals, directed outwardly. We also introduce the notation $s=\sum\limits_{i=0}^{n}s_i=\sum\limits_{i=1}^{n}s_i$. Now we have

\begin{statement}
$$
\frac{1}{\mathrm{ Vol }_{n}(S)}\mathrm{ Vol }(\mathrm{ conv }(S,S^H))=2n\sum\limits_{l=1}^k\frac{\langle u_{i_l}, u\rangle \langle u, s-s_{i_l}\rangle}{|\langle u_{i_l}, (n+1)s_{i_l}-s\rangle |}.
$$
\end{statement}

\begin{proof}
The volume of the convex hull is
$$
\mathrm{ Vol }(\mathrm{ conv }(S,S^H)=2\sum\limits_{l=1}^k \mathrm{ Vol }(\mathrm{ conv }(F_{i_l},F'_{i_l}))=2\sum\limits_{l=1}^k \mathrm{ Vol }_{n-1}(F'_{i_l})m_{i_l},
$$
where $m_{i_l}$ is the length of the segment from the centroid of $F_{i_l}$ to the centroid of $F'_{i_l}$. Let $F_{i_l}$ be the face spanned by the vectors $s_i\in S$, where $i\neq i_l$, and assume that $s_{i_1}=s_0=0$. Then
$$
m_{i_l}=\left\{
\begin{array}{ccc}
\frac{1}{n}\sum\limits_{i=1}^n\langle u,s_i\rangle & \mbox{ if } & i_l=0 \\
\frac{1}{n}\sum\limits_{i=1, i\neq i_l}^n\langle u,s_i\rangle & \mbox{ if } & i_l\neq 0
\end{array}
\right.
$$

We now have in the case $0\not \in \{i_1,\cdots ,i_k\}$
$$
\mathrm{ Vol }(\mathrm{ conv }(S,S^H))=2\sum\limits_{l=1}^k\frac{1}{n}\sum\limits_{i\neq i_l}\left\langle u,s_i\right\rangle \cdot \mathrm{ Vol }_{n-1}(F'_{i_l})=\frac{2}{n}\sum\limits_{l=1}^k\sum\limits_{i\neq i_l}\left\langle u\mathrm{ Vol }_{n-1}\left(F'_{i_l}\right),s_i\right\rangle=
$$
$$
=\frac{2}{n}\sum\limits_{l=1}^k\left\langle u\mathrm{ Vol }_{n-1}(F'_{i_l}),\sum\limits_{i\neq i_l}s_i\right\rangle=\frac{2}{n}\sum\limits_{l=1}^k\left\langle \mathrm{ Vol }_{n-1}(F'_{i_l})u, s-s_{i_l}\right\rangle ,
$$
and
$$
\mathrm{ Vol }(\mathrm{ conv }(S,S^H))=\frac{2}{n}\left\langle \mathrm{ Vol }_{n-1}(F'_0)u, s\right\rangle
+\frac{2}{n}\sum\limits_{l=2}^k\left\langle \mathrm{ Vol }_{n-1}(F'_{i_l})u, s-s_{i_l}\right\rangle
$$
in the other case. The two formulas can be written in the following common form:
$$
\mathrm{ Vol }(\mathrm{ conv }(S,S^H))=\frac{2}{n}\sum\limits_{l=1}^k\langle \mathrm{ Vol }_{n-1}(F'_{i_l})u, s-s_{i_l}\rangle
$$
in which $i_l$ also can be zero.

For all $s_{i}$ we have the inequality $\langle u,s_i\rangle\geq 0$ and we know that the equalities
$$
\mathrm{ Vol }_{n}(S)=\frac{1}{n} \left|\left\langle u_{i_l}, s_{i_l}-\frac{1}{n}(s-s_{i_l})\right\rangle \right| \mathrm{ Vol }_{n-1}(F_{i_l})=\frac{1}{n^2}\left|\left\langle u_{i_l},(n+1)s_{i_l}-s\right\rangle\right|\mathrm{ Vol }_{n-1}(F_{i_l})
$$
hold, where $u_{i_l}$ is the unit normal vector of the hyperplane of $F_{i_l}$. On the other hand we have a connection between $\mathrm{ Vol }_{n-1}(F'_{i_l})$ and $\mathrm{ Vol }_{n-1}(F_{i_l})$ of the form
$$
\mathrm{ Vol }_{n-1}(F'_{i_l})=\langle u_{i_l}, u\rangle \mathrm{ Vol }_{n-1}(F_{i_l})
$$
showing that
$$
\mathrm{ Vol }_{n-1}(F'_{i_l})=\frac{\langle u_{i_l}, u\rangle }{|\langle u_{i_l}, (n+1)s_{i_l}-s\rangle |} n^2 \mathrm{ Vol }_{n}(S).
$$
From the above two formulas we have
$$
\mathrm{ Vol }(\mathrm{ conv }(S,S^H))=2n\sum\limits_{l=1}^k\frac{\langle u_{i_l}, u\rangle \langle u, s-s_{i_l}\rangle}{|\langle u_{i_l}, (n+1)s_{i_l}-s\rangle |}\mathrm{ Vol }_{n}(S)
$$
as we stated.
\end{proof}

\begin{remark} The denominator of the $l^{th}$ term of the formula of the statement has a geometric meaning; it is equal to $n$-times the height $m_{i_l}$ of the simplex corresponding to the vertex $s_{i_l}$. In fact, we have
$$
|\langle u_{i_l}, (n+1)s_{i_l}-s\rangle |=(n+1)\left|\langle u_{i_l}, s_{i_l}-\frac{1}{n+1}s\rangle \right|=(n+1)\frac{n}{n+1}|\langle u_{i_l}, s''_{i_l}\rangle |,
$$
where $s''_{i_l}$ is the vector from $s_{i_l}$ to the centroid of the corresponding facet with normal vector $u_{i_l}$. The geometric definition of inner product proves this observation. Thus we have an upper bound on the relative volume:
$$
\frac{1}{\mathrm{ Vol }_{n}(S)}\mathrm{ Vol }(\mathrm{ conv }(S,S^H))\leq 2\sum\limits_{l=1}^k\sum \limits_{i\neq i_l}\frac{\| s_i\|}{m_{i_l}}\leq 2k(n-1)\max\left\{\frac{\|s_i\|}{m_j} \quad i\neq j\right\}.
$$
Observe that this bound is not sharp. However for those simplices which have a small height the relative volume can be large.
\end{remark}

\section{Regular simplices}

The last thought of the previous section motivates the investigation of such simplices for which the ratios $\left\{\frac{\|s_i\|}{m_j} \quad i\neq j\right\}$ are not too large. For example we can solve the original problem in the case of the regular simplex. Denote the Euclidean norm of a vector $x$ by $\|x\|$.

\begin{theorem}
If $S$ is  the regular simplex of dimension $n$, then
$$
c(S,S^H):=\frac{1}{\mathrm{ Vol }_{n}(S)}\mathrm{ Vol }(\mathrm{ conv }(S,S^H))=2n,
$$
attained only in the case when $u=u_0=\frac{s}{\|s\|}$.
\end{theorem}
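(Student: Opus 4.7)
\medskip
\noindent\textbf{Plan for the proof of Theorem 3.}

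The first step is to instantiate the formula of Statement 2 for the regular simplex. Place $s_0=0$ (this is forced by the Main Lemma, which requires the intersection to be a single common vertex), and write $h_i:=\langle s_i,u\rangle$, $h:=\sum_{i=0}^n h_i$. For the regular simplex of edge length $a$ and height $m$ one computes directly from $u_i\parallel (s-(n+1)s_i)$ that
\[
u_i=\frac{s-(n+1)s_i}{nm},\qquad
\langle u_i,u\rangle=\frac{h-(n+1)h_i}{nm},\qquad
|\langle u_i,(n+1)s_i-s\rangle|=nm,
\]
so that Statement 2 collapses to
\[
c(S,S^H)=\frac{2}{nm^2}\sum_{l\in T}(h-(n+1)h_{i_l})(h-h_{i_l}),\qquad
T=\{\,i:h_i<h/(n+1)\,\}.
\]

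Next I would derive the symmetry identity
\[
(n+1)\sum_{i=0}^n h_i^2 \;-\; h^2 \;=\; nm^2,
\]
which follows from $\|s_i\|^2=a^2$, $\langle s_i,s_j\rangle=a^2/2$ for $i\ne j\ge 1$ (equivalently, from the fact that $\sum_{i\ge 1} s_is_i^T$ is a scalar plus a rank-one correction along $s$). Cauchy--Schwarz $(\sum_{i\ge 1}h_i)^2\le n\sum_{i\ge 1}h_i^2$ combined with this identity yields $h\le nm$, with equality iff $h_1=\cdots=h_n$, i.e.\ iff $u=u_0=s/\|s\|$. This is the inequality that ultimately drives the theorem.

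Summing the quadratic $(h-(n+1)h_i)(h-h_i)$ over \emph{all} $i\in\{0,\dots,n\}$ and using the symmetry identity gives the pleasant closed form $\sum_{i=0}^n(h-(n+1)h_i)(h-h_i)=nm^2$. Separating the sum into the (nonnegative) $T$-part and the (nonpositive) $T^c$-part, this rewrites $c$ as
\[
c(S,S^H)=2+\frac{2}{nm^2}\sum_{i\notin T}\bigl((n+1)h_i-h\bigr)(h-h_i),
\]
so the theorem reduces to proving $\sum_{i\notin T}((n+1)h_i-h)(h-h_i)\le n(n-1)m^2$ with equality exactly at $u_0$.

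In the principal case $T=\{0\}$ the simplified formula is just $c=2h^2/(nm^2)$, and the bound $h\le nm$ with its equality condition gives $c\le 2n$, attained uniquely at $u=u_0$. The main obstacle I anticipate is the residual case $|T|\ge 2$: a face $F_j$ ($j\ge 1$) joining the upper side means $h_j<h/(n+1)$, which forces the vector $(h_1,\dots,h_n)$ to be nonuniform and so, by the Cauchy--Schwarz argument above, forces $h^2$ to be strictly smaller than $n^2m^2$. To turn this heuristic into a proof I would introduce $A_i:=(n+1)h_i-h$, use the two algebraic identities $\sum A_i=0$ and $\sum A_i^2=n(n+1)m^2$ (which both follow cleanly from the previous identities), and then apply Cauchy--Schwarz to $\sum_{i\notin T}A_i(nh-A_i)/(n+1)$ to bound the $T^c$-sum by a quantity controlled by $h^2$ and the size of $T$. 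The hard quantitative point is packaging the deficit $n^2m^2-h^2$ caused by $|T|\ge 2$ so that it strictly dominates the additional positive contributions from the extra upper faces; verifying this strict inequality (and that it is strict for every $u\ne u_0$) is where I expect the real work of the proof to lie.
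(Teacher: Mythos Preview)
Your algebraic reduction is cleaner than the paper's: the formula $c=\frac{2}{nm^2}\sum_{i\in T}(h-(n+1)h_i)(h-h_i)$, the identity $(n+1)\sum_i h_i^2-h^2=nm^2$, and the Cauchy--Schwarz consequence $h\le nm$ (with equality exactly at $u=u_0$) are all correct. The case $T=\{0\}$ then gives $c=2h^2/(nm^2)\le 2n$, which is exactly the paper's computation $c=2n\langle u_0,u\rangle^2\le 2n$ in different notation. Your further identity $\sum_{i=0}^n(h-(n+1)h_i)(h-h_i)=nm^2$, rewriting $c$ as $2$ plus a nonnegative complement sum, does not appear in the paper and is a genuine simplification of the bookkeeping.

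The gap is exactly where you flag it: the case $|T|\ge 2$. Here the paper does \emph{not} use your $A_i$-variables or a global Cauchy--Schwarz argument. Instead it bounds $\langle s_{i_l},u\rangle^2\le\langle s_{i_l},u\rangle$ (using $\|s_{i_l}\|=1$), collapses the expression to a function of the two scalars $\langle u_0,u\rangle$ and $\langle s_{2,k}^0,u\rangle$ with $s_{2,k}=\sum_{l\ge 2}s_{i_l}$, derives two-sided constraints on these from the upper-facet inequalities $\langle s_{i_l},u\rangle\le\sqrt{n/(2(n+1))}\,\langle u_0,u\rangle$ together with a triangle inequality for the angles between $u,u_0,s_{2,k}^0$, and finally evaluates the resulting one-variable parabola at the endpoints of its admissible interval. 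It is an ad hoc constrained-optimization argument rather than the structural one you sketch.

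Your heuristic that $|T|\ge 2$ forces a deficit in $h^2$ is sound---indeed one checks that $h^2\le (n^2-1)m^2$ as soon as some $j\ge 1$ lies in $T$---but the obvious bounds you might try (AM--GM on each complement term, or $Q\le hP$ with $P=\sum_{i\in T}|A_i|$, $Q=\sum_{i\in T}A_i^2$) are not sharp enough to give $\sum_{i\notin T}((n+1)h_i-h)(h-h_i)\le n(n-1)m^2$ once $n\ge 3$. So either you will need a tighter coupling of the complement sum to the deficit in $h^2$ via your identities $\sum A_i=0$, $\sum A_i^2=n(n+1)m^2$, or you may find it simpler to fall back on an endpoint analysis of the type the paper carries out.
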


\begin{proof}
Observe that in this case
$$
u_{i_l}= \left\{
\begin{array}{lcc}
\frac{(n+1)s_{i_l}-s}{\| (n+1)s_{i_l}-s\|} & \mbox{if} & i_l\neq 0 \\
\frac{ s}{\| s\|} & \mbox{if} & i_l=0,
\end{array}
\right.
$$
and
$$
\| (n+1)s_{i_l}-s\|=|\langle u_{i_l}, (n+1)s_{i_l}-s\rangle|=|(n+1)\langle u_{i_l}, s_{i_l}\rangle-\langle u_{i_l}, s_{i_l}\rangle|=n\sqrt{\frac{n+1}{2n}}\|s_1\|.
$$
Without loss of generality we can assume that $\|s_1\|=\ldots =\|s_n\|=1$. It is easy to see that in the case of the regular simplex $i_1=0$ corresponds to an upper facet and thus
$$
\frac{\mathrm{ Vol }(\mathrm{ conv }(S,S^H))}{\mathrm{ Vol }_{n}(S)}=2n\sum\limits_{l=1}^k\frac{\langle u_{i_l}, u\rangle \langle u, s-s_{i_l}\rangle}{|\langle u_{i_l}, (n+1)s_{i_l}-s\rangle |}=
$$
$$
=2n\left(\langle u_0, u\rangle ^2+ \sum\limits_{l=2}^k\frac{{2}\langle  -(n+1)s_{i_l}+s, u \rangle\langle u, s-s_{i_l}\rangle}{{(n+1)n}}\right)=
$$
$$
=2n\left(\langle u_0, u\rangle ^2+ \sum\limits_{l=2}^k\frac{2\left(-(n+1)\langle s_{i_l},u\rangle+\langle s, u \rangle\right)\left(\langle u, s\rangle-\langle u,s_{i_l}\rangle\right)}{(n+1)n}\right)=
$$
$$
=2n\left(\langle u_0, u\rangle ^2+ \sum\limits_{l=2}^k\frac{2\left((n+1)\langle s_{i_l},u\rangle^2-(n+2)\langle s, u \rangle\langle u, s_{i_l}\rangle+\langle u,s\rangle^2\right)}{(n+1)n}\right)=
$$
$$
=2n\left(\langle u_0, u\rangle ^2+ \frac{2}{(n+1)n}\sum\limits_{l=2}^k\left((n+1)\left\langle s_{i_l},u\right\rangle^2-(n+2)\langle s, u \rangle\left\langle u, s_{i_l}\right\rangle+\langle u,s\rangle^2\right)\right)=
$$
$$
=2n\left(\langle u_0, u\rangle ^2+\sum\limits_{l=2}^k\left(\frac{2}{n}\left\langle s_{i_l},u\right\rangle^2-\frac{2(n+2)}{n(n+1)}\langle s, u \rangle\left\langle  s_{i_l},u\right\rangle+\frac{2}{n(n+1)}\langle u,s\rangle^2\right)\right).
$$
If the only upper facet corresponds to the normal vector $u_0$, then only the first term in the formula of Statement 2 occurs -- meaning that $k=1$ --- and the maximal value of the right hand side is less than or equal to $2n$ with equality in the case mentioned in the statement.

Assume now that $k\geq 2$.
By the regularity of the simplex we have that $s=\sqrt{\frac{(n+1)n}{2}}u_0$, hence we get that
$$
\frac{\mathrm{ Vol }(\mathrm{ conv }(S,S^H))}{\mathrm{ Vol }_{n}(S)}:=2nf\left(\left\langle s_{2,k}^0,u\right\rangle,\langle u_0, u \rangle\right)=
$$
$$
=2n\left(\langle u_0, u\rangle ^2+\sum\limits_{l=2}^k\left(\frac{2}{n}\left\langle s_{i_l},u\right\rangle^2-\sqrt{\frac{2}{n}}\frac{(n+2)}{\sqrt{n+1}}\langle u_0, u \rangle\left\langle  s_{i_l},u\right\rangle+\langle u_0,u\rangle^2\right)\right)\leq
$$
$$
\leq2n\left(\langle u_0, u\rangle ^2+\left(\frac{2}{n}\left\langle \sum\limits_{l=2}^ks_{i_l},u\right\rangle-\sqrt{\frac{2}{n}}\frac{(n+2)}{\sqrt{n+1}}\langle u_0, u \rangle\left\langle  \sum\limits_{l=2}^k s_{i_l},u\right\rangle+(k-1)\langle u_0,u\rangle^2\right)\right)=
$$
$$
=2n\left(\left(\frac{\sqrt{2(k-1)k}}{n}\left\langle s_{2,k}^0,u\right\rangle-\sqrt{\frac{(k-1)k}{n(n+1)}}(n+2)\langle u_0, u \rangle\left\langle  s_{2,k}^0,u\right\rangle+k\langle u_0,u\rangle^2\right)\right)=:
$$
$$
=:2ng\left(\left\langle s_{2,k}^0,u\right\rangle,\langle u_0, u \rangle\right),
$$
where  $s_{2,k}:=\sum\limits_{i\neq i_l}s_{i_l}$ and  $s_{2,k}^0:=\frac{s_{2,k}}{\|s_{2,k}\|}$.
Here we used that $0\leq \left\langle s_{i_l},u\right\rangle \leq 1$, implying that $\left\langle s_{i_l},u\right\rangle^2\leq \left\langle s_{i_l},u\right\rangle$.
First we remark that we have $\frac{1}{n}\leq \langle u_0,u\rangle\leq \sqrt{1-\frac{1}{n^2}}$, since the simplex is in the upper half-space determined by the hyperplane of reflection. Furthermore we can observe that if a vertex $s_{i_l}$ gives an upper facet then
$$
\left\langle \sum\limits_{i\neq i_l}\left(s_i-s_{i_l}\right), u\right\rangle\geq 0,
$$
implying that
$$
\left\langle s-(n+1)s_{i_l}, u\right\rangle\geq 0.
$$
From this we get a new connection between the parameters $\left\langle s_{i_l},u\right\rangle$ and $\langle u_0,u\rangle$. We get that
$$
\left\langle s_{i_l},u\right\rangle\leq \frac{\|s\|_2}{(n+1)}\langle u_0,u\rangle=\sqrt{\frac{n}{2(n+1)}}\langle u_0,u\rangle.
$$
This implies that
$$
\left\langle s_{2,k}^0,u\right\rangle\leq (k-1)\sqrt{\frac{2}{(k-1)k}}\sqrt{\frac{n}{2(n+1)}}\langle u_0,u\rangle=\sqrt{\frac{(k-1)n}{k(n+1)}}\langle u_0,u\rangle.
$$
On the other hand, if we write that
$$
\langle u_0,u\rangle:=\cos \alpha  \mbox{, } \langle s_{2,k}^0,u_0\rangle :=\cos \beta \mbox{ and } \langle s_{2,k}^0,u\rangle:=\cos \gamma,
$$
then we get that $\gamma \leq \alpha + \beta$, and so $\cos \alpha \cos \beta-\sin\alpha \sin\beta \leq \cos \gamma$.
But
$$
\cos\beta =\frac{(k-1)+\frac{1}{2}(k-1)(n-1)}{\sqrt{\frac{(k-1)kn(n+1)}{4}}}=\sqrt{1-\frac{n-k+1}{nk}} \mbox{ and } \sin\beta=\sqrt{\frac{n-k+1}{nk}},
$$
hence we have a second inequality which is:
$$
\langle u_0,u\rangle\sqrt{\frac{(n+1)(k-1)}{nk}}-\sqrt{1-\langle u_0,u\rangle^2}\sqrt{\frac{n-k+1}{nk}}\leq  \langle s_{2,k}^0,u\rangle.
$$
Comparing the two inequalities we get a new one on $\langle u_0,u\rangle$. More precisely we have that
$$
\langle u_0,u\rangle\sqrt{\frac{(n+1)(k-1)}{nk}}-\sqrt{1-\langle u_0,u\rangle^2}\sqrt{\frac{n-k+1}{nk}}\leq \sqrt{\frac{(k-1)n}{k(n+1)}}\langle, u_0,u\rangle
$$
implying that
$$
\langle u_0,u\rangle^2\leq \frac{(n+1)(n-k+1)}{(k-1)+(n+1)(n-k+1)}.
$$
Observe that the examined function $g\left(\left\langle s_{2,k}^0,u\right\rangle,\langle u_0, u \rangle\right)$ of two variables is a parable if we fix its first variable $\langle s_{2,k}^0,u\rangle$.  Its maximal value can be found at the boundary of the domain which are at the endpoints $\langle u_0,u\rangle=\frac{1}{n}$ and $\langle u_0,u\rangle=\sqrt{\frac{(n+1)(n-k+1)}{(k-1)+(n+1)(n-k+1)}}$, respectively.

If now we assume that $\langle u_0,u\rangle=\frac{1}{n}$, then we omit the negative (middle) part of the sum and use the inequality on $2nf\left(\left\langle s_{2,k}^0,u\right\rangle,\langle u_0, u \rangle\right)$ to determine an upper bound. We get that it is equal to
$$
\frac{\sqrt{2(k-1)k}}{n}\sqrt{\frac{(k-1)n}{k(n+1)}}\frac{1}{n}+\frac{k}{n^2}=\frac{\sqrt{2\frac{n}{n+1}}(k-1)+k}{n^2}<1,
$$
since $k\leq n-1$ and $n\geq 3$.

In the other case $\langle u_0,u\rangle=\sqrt{\frac{(n+1)(n-k+1)}{(k-1)+(n+1)(n-k+1)}}$, and we get two equalities
$$
\langle s_{2,k}^0,u\rangle=\sqrt{\frac{(k-1)n(n-k+1)}{k\left((k-1)+(n+1)(n-k+1)\right)}}
$$
and
$$
\langle s_{i_l},u\rangle=\sqrt{\frac{n(n-k+1)}{2\left((k-1)+(n+1)(n-k+1)\right)}},
$$
respectively. Using these parameters the value of the original function is
$$
\frac{\mathrm{ Vol }(\mathrm{ conv }(S,S^H))}{\mathrm{ Vol }_{n}(S)}=
$$
$$
=2n\left(\langle u_0, u\rangle ^2+\sum\limits_{l=2}^k\left(\frac{2}{n}\left\langle s_{i_l},u\right\rangle^2-\sqrt{\frac{2}{n}}\frac{(n+2)}{\sqrt{n+1}}\langle u_0, u \rangle\left\langle  s_{i_l},u\right\rangle+\langle u_0,u\rangle^2\right)\right)=
$$
$$
=2n\left(\frac{(n+1)(n-k+1)}{(k-1)+(n+1)(n-k+1)}+\sum\limits_{l=2}^k\left(\frac{n-k+1}{(k-1)+(n+1)(n-k+1)}-\right.\right.
$$
$$
\left.\left.-(n+2)\frac{n-k+1}{(k-1)+(n+1)(n-k+1)} +(n+1)\frac{n-k+1}{(k-1)+(n+1)(n-k+1)}\right)\right)<2n,
$$
showing the truth of the statement.
\end{proof}

\section{Again general simplices}

We note that the result of the case of reflection at a hyperplane gives an intermediate value between the results corresponding to translates and point reflections. The part of the previous proof corresponding to the case of a single upper facet can be extended to a general simplex, too. Let $G$ denote the Gram matrix of the vector system $\{s_1,\ldots,s_n\}$, defined by the product $M^TM$, where $M=[s_1,\cdots,s_n]$ is the matrix with columns $s_i$. In the following theorem we use the notation $\|\cdot\|_1$ for the $l_1$ norm of a vector or a matrix, respectively.

\begin{theorem}
If the only upper facet is $F_0$ with unit normal vector $u_0$, then we have the inequality
$$
\frac{1}{\mathrm{ Vol }_{n}(S)}\mathrm{ Vol }(\mathrm{ conv }(S,S^H))\leq n\left(1+\frac{\|s\|}{\langle u_0,s\rangle }\right)=
$$
$$
=\left(n+\sqrt{\left\|(1,\ldots,1)G^{-1}\right\|_1}\left\|M(1,\ldots,1)\right\|\right).
$$
Equality is attained if and only if the normal vector $u$ of $H$ is equal to $\frac{u_0+s'}{\|u_0+s'\|}$, where $s'=\frac{s}{\|s\|}$ is the unit vector of the direction of $s$.
\end{theorem}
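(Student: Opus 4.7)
The plan is to specialize the formula of Statement 2 to the single-facet case and then maximize the resulting expression in $u$ by a rank-two spectral argument, ending with a Gram-matrix reformulation. Since by hypothesis $F_0$ is the only upper facet, Statement 2 collapses to its $l=1$, $i_1=0$ summand. Using $s_0=0$ and that $u_0$ points outward from $s_0=0$ (so $\langle u_0,s\rangle>0$), the denominator reduces to $|\langle u_0,-s\rangle|=\langle u_0,s\rangle$, giving
$$
\frac{\mathrm{Vol}(\mathrm{conv}(S,S^H))}{\mathrm{Vol}_n(S)}=\frac{2n\,\langle u_0,u\rangle\,\langle u,s\rangle}{\langle u_0,s\rangle}.
$$

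The task thus reduces to maximizing $2\langle u_0,u\rangle\langle u,s\rangle = u^{T}(u_0 s^{T}+s u_0^{T})u$ over the unit sphere. The symmetric rank-$2$ matrix $A:=u_0 s^{T}+s u_0^{T}$ vanishes on $\mathrm{span}(u_0,s)^{\perp}$; writing $s'=s/\|s\|$ and $\sigma=\langle u_0,s\rangle$, a direct check shows $A(u_0\pm s')=(\sigma\pm\|s\|)(u_0\pm s')$, so the two nonzero eigenvalues are $\sigma\pm\|s\|$. The maximum of $u^{T}Au$ over $\|u\|=1$ is therefore the top eigenvalue $\sigma+\|s\|$, attained at the normalised eigenvector $u=(u_0+s')/\|u_0+s'\|$. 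Substituting into the identity above yields the first claimed bound $n\bigl(1+\|s\|/\langle u_0,s\rangle\bigr)$ together with the stated equality case.

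For the Gram-matrix form, $\|s\|=\|M\mathbf{1}\|$ since $M\mathbf{1}=\sum_i s_i=s$. Because $u_0$ is perpendicular to $s_i-s_j$ for $i,j\geq 1$, the inner products $\langle u_0,s_i\rangle$ all equal a common value $h$, namely the distance from the origin to $\mathrm{aff}(F_0)$, so $\langle u_0,s\rangle=nh$ and $M^{T}u_0=h\mathbf{1}$. Inverting gives $u_0=h(M^{T})^{-1}\mathbf{1}$, and $\|u_0\|^{2}=1$ becomes $h^{-2}=\mathbf{1}^{T}G^{-1}\mathbf{1}$, whence
$$
n\left(1+\frac{\|s\|}{\langle u_0,s\rangle}\right)=n+\frac{\|s\|}{h}=n+\|M\mathbf{1}\|\sqrt{\mathbf{1}^{T}G^{-1}\mathbf{1}}.
$$
The remaining identification $\mathbf{1}^{T}G^{-1}\mathbf{1}=\|(1,\dots,1)G^{-1}\|_{1}$ reduces, upon setting $w:=G^{-1}\mathbf{1}$, to the componentwise nonnegativity $w\geq 0$; since $Mw=u_0/h$, this is the geometric condition that $u_0$ lies in the nonnegative cone generated by $s_1,\dots,s_n$.

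I expect this sign check to be the main subtlety, requiring one to exploit the hypothesis that $F_0$ is the only upper facet to place $u_0$ inside $\mathrm{cone}(s_1,\dots,s_n)$. The eigenvalue maximisation in the second paragraph is a routine $2\times 2$ calculation, and the passage $\langle u_0,s\rangle\leftrightarrow h$ in the third is elementary linear algebra; all the interesting geometry is concentrated in this final cone condition, which also pins down why the maximiser $u=(u_0+s')/\|u_0+s'\|$ is itself admissible (i.e.\ leaves $F_0$ the only upper facet) in the equality case.
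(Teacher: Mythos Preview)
Your proof follows the same three-step line as the paper: specialise Statement~2 to the single upper facet $F_0$, maximise the resulting expression over unit vectors $u$, and then rewrite $\langle u_0,s\rangle$ via the Gram matrix. The one genuine difference is the maximisation step. The paper bounds $\langle u_0,u\rangle\langle u,s'\rangle$ by the AM--GM/Cauchy--Schwarz chain
\[
\langle u_0,u\rangle\langle u,s'\rangle\;\le\;\Bigl(\tfrac{\langle u_0,u\rangle+\langle u,s'\rangle}{2}\Bigr)^{2}
=\Bigl\langle \tfrac{u_0+s'}{2},\,u\Bigr\rangle^{2}\;\le\;\Bigl\|\tfrac{u_0+s'}{2}\Bigr\|^{2},
\]
while you diagonalise the rank-two form $u^{T}(u_0s^{T}+su_0^{T})u$ directly. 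Both routes are short and pin down the same maximiser $u=(u_0+s')/\|u_0+s'\|$; your spectral version is arguably cleaner in that it yields the exact optimum and optimiser simultaneously, whereas the paper's chain needs the extra observation that the second inequality (parallelism) already forces equality in the first.

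On the Gram-matrix identity you are, if anything, more scrupulous than the paper. The paper simply asserts that $u_0=\sum_i\alpha_i s_i$ with $\alpha_i\ge 0$ and proceeds; you correctly isolate this as the point where $\mathbf{1}^{T}G^{-1}\mathbf{1}$ becomes $\|(1,\dots,1)G^{-1}\|_{1}$. Note, however, that your proposed justification (``exploit the hypothesis that $F_0$ is the only upper facet'') is not quite on target: that hypothesis constrains $u$, not $u_0$, placing $u$ (not $u_0$) inside $\mathrm{cone}(s_1,\dots,s_n)$. Whether $u_0$ itself lies in that cone is a property of the simplex alone, and the paper does not supply an argument for it either. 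Your proof of the inequality and of the first displayed equality is complete; the passage to the $\ell_1$ expression rests on exactly the same unproved sign condition as in the paper.
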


Before the proof we remark that for the regular simplex
$$
G=\left(
   \begin{array}{cccc}
     1 & \frac{1}{2} & \cdots & \frac{1}{2} \\
     \frac{1}{2} & 1 & \cdots & \frac{1}{2} \\
     \vdots & \vdots & \vdots & \vdots \\
     \frac{1}{2} & \cdots & \frac{1}{2} & 1 \\
   \end{array}
 \right) \mbox{ and }
 G^{-1}=\left(
         \begin{array}{cccc}
           \frac{2n}{n+1} & -\frac{2}{n+1} & \cdots & -\frac{2n}{n+1} \\
           -\frac{2}{n+1} & \frac{2n}{n+1} & \cdots & -\frac{2}{n+1} \\
           \vdots & \vdots & \vdots & \vdots \\
           -\frac{2}{n+1} & \cdots & -\frac{2}{n+1} & \frac{2n}{n+1} \\
         \end{array}
       \right),
$$
implying that
$$
n+\sqrt{\left\|(1,\ldots,1)G^{-1}\right\|_1}\left\|M(1,\ldots,1)\right\|=n+\sqrt{\frac{2n}{n+1}}\sqrt{\frac{n(n+1)}{2}}=2n.
$$

\begin{proof}
From our formula we get that in the case when the only upper facet is $F_0$ we have that
$$
\mathrm{ Vol }(\mathrm{ conv }(S,S^H))=2n\sum\limits_{l=1}^k\frac{\langle u_{i_l}, u\rangle \langle u, s-s_{i_l}\rangle}{|\langle u_{i_l}, (n+1)s_{i_l}-s\rangle |}\mathrm{ Vol }_{n}(S)=2n\frac{\langle u_0, u\rangle \langle u, s\rangle}{|\langle u_0,-s\rangle |}\mathrm{ Vol }_{n}(S)=
$$
$$
=2n\frac{\langle u_0, u\rangle \langle u, s'\rangle}{|\langle u_0,-s'\rangle |}\mathrm{ Vol }_{n}(S),
$$
where $s'=\frac{s}{\|s\|}$. Since $\langle u_0,-s'\rangle$ is independent from the choice of $u$ we have to determine the maximal value of
$$
\langle u_0, u\rangle \langle u, s'\rangle.
$$
Using the positivity of these numbers we can see that
$$
\langle u_0, u\rangle \langle u, s'\rangle\leq \left(\frac{\langle u_0, u\rangle+\langle u, s'\rangle}{2}\right)^2=\left(\left\langle \frac{u_0+s'}{2},u\right\rangle\right)^2\leq \left\|\frac{u_0+s'}{2}\right\|^2,
$$
with equality if and only if $\langle u_0, u\rangle =\langle u, s'\rangle$ and $u$ is parallel to $\frac{u_0+s'}{2}$. The second condition implies the first one. Thus we have
$$
u=\frac{u_0+s'}{\|u_0+s'\|},
$$
and hence
$$
\mathrm{ Vol }(\mathrm{ conv }(S,S^H))\leq 2n\frac{\left\|\frac{u_0+s'}{2}\right\|^2}{|\langle u_0,-s'\rangle |}\mathrm{ Vol }_{n}(S)= n\frac{1+\langle u_0,s'\rangle}{\langle u_0,s'\rangle }\mathrm{ Vol }_{n}(S)=n\left(1+\frac{\|s\|}{\langle u_0,s\rangle }\right)\mathrm{ Vol }_{n}(S),
$$
with equality if and only if $$
u=\frac{u_0+s'}{\|u_0+s'\|},
$$
as we stated.

On the other hand, the geometric condition in the statement algebraically means that there are non-negative coefficients $\alpha_i$ such that
$$
u_0=\sum\limits_{i=1}^n \alpha_is_i.
$$
By the definition of $u_0$ for each pair of indices $1\leq i<j\leq n$ the equality $\langle u_0,s_i\rangle=\langle u_0,s_j\rangle$ is also valid. We can explicitly determine the coefficients $\alpha_i$. In fact, we have $1=\sum\limits_{i=1}^n \alpha_i\langle u_0,s_i\rangle$ and so $\frac{1}{\sum\limits_{i=1}^n \alpha_i}=\langle u_0,s_1\rangle=\ldots =\langle u_0,s_n\rangle$. Writing $M=[s_1,\ldots ,s_n]$ we get that $u_0=M(\alpha_1,\ldots,\alpha_n)^T$, and thus
$$
\frac{1}{\sum\limits_{i=1}^n \alpha_i}(1,\ldots,1)=u_0^TM=(\alpha_1,\ldots,\alpha_n)G
$$
where $G$ is the Gram matrix of the vector system $\{s_1,\ldots ,s_n\}$, so $G=[g_{i,j}]=M^TM$. If
$$
(1,\ldots,1)G^{-1}=:(\beta_1,\ldots ,\beta_n)=\sqrt{\sum\limits_{i=1}^n \beta_i}\left(\frac{\beta_1}{\sqrt{\sum\limits_{i=1}^n \beta_i}},\ldots ,\frac{\beta_n}{\sqrt{\sum\limits_{i=1}^n \beta_i}}\right),
$$
then
$$
\alpha_i=\frac{\beta_i}{\sqrt{\sum\limits_{i=1}^n \beta_i}},
$$
since this choice implies
$$
\sum\limits_{i=1}^n \alpha_i=\sqrt{\sum\limits_{i=1}^n \beta_i}.
$$
Thus we have
$$
\left(\alpha_1,\ldots,\alpha_n\right)=\sqrt{\frac{1}{\left\|(1,\ldots,1)G^{-1}\right\|_1}}(1,\ldots,1)G^{-1},
$$
and so
$$
\left\langle u_0,s \right\rangle=\frac{1}{\sqrt{\left\|(1,\ldots,1)G^{-1}\right\|_1}}(1,\ldots,1)G^{-1}G(1,\ldots,1)^T= \frac{n}{\sqrt{\left\|(1,\ldots,1)G^{-1}\right\|_1}},
$$
proving the equality in the statement.
\end{proof}

\section{Acknowledgement}

The author wish to thank Horst Martini for various helpful hints and for the list of concrete errors.

\end{document}